\author{Piotr Bania}
\title{Information based approach to stochastic control problems}
\theoremstyle{theorem}
\newtheorem{twr}{Theorem}
\newtheorem{lem}{Lemma}
\newtheorem{defi}{Definition}
\definecolor{color02}{rgb}{0.00,0.00,1.00}
\begin{document}
\title{Information based approach to stochastic control problems}
\date{November 06, 2019}
\author{Piotr Bania\footnote{Department of Automatic Control and Robotics, AGH University of Science and Technology, Al. A. Mickiewicza 30, 30-059 Krakow, Poland, e-mail: pba@agh.edu.pl. This is a preprint of an article accepted for publication in \href{https://www.amcs.uz.zgora.pl}{\textbf{International Journal of Applied Mathematics and Computer Science, AMCS, Vol. 30, No. 1, 2020.}}}}
\maketitle

\begin{abstract}      
An information based method for solving stochastic control problems with partial observation has been proposed. First, the information-theoretic lower bounds of the cost function has been analysed. It has been shown, under rather weak assumptions, that reduction of the expected cost with closed-loop control compared to the best open-loop strategy is upper bounded by non-decreasing function of mutual information between control variables and the state trajectory. On the basis of this result, an \textit{Information Based Control} method has been developed. The main idea of the IBC consists in replacing the original control task by a sequence of control problems that are relatively easy to solve and such that information about the state of the system is actively generated. Two examples of the operation of the IBC are given. It has been shown that the IBC is able to find the optimal solution without using dynamic programming at least in these examples. Hence the computational complexity of the IBC is substantially smaller than complexity of dynamic programming, which is the main advantage of the proposed method.
\end{abstract}

\textbf{Keywords:} Stochastic control, feedback, information, entropy.

\section{Introduction}

 Optimal controller synthesis in stochastic systems with partial observation can be performed by dynamic programming (DP). Unfortunately, despite the theory of DP is well developed (\cite{Zabczyk:1996}), its computational complexity grows exponentially with the number of variables and time steps. As a consequence the problem is practically intractable.

To overcome the curse of dimensionality, a number of approximate methods has been developed. \textit{Separation principle} and \textit{certainty equivalence} assumption has been used by \cite{FilUn:2004}, \cite{AstWitt:1995},  \cite{Tse:1974}, \cite{BshTse:1976}. As part of the theory of Partially-Observable Markov Decision Processes (POMDP), various \textit{policy-iteration} or \textit{value-iteration} methods were developed by  \cite{Thrun:2000}, \cite{Porta:2006}, \cite{Brechtel:2013}, \cite{Dolgov:2017}, \cite{Zhao:2019} and many other researchers. These methods were initially developed for systems with finite number of states and then adopted to more general problems with smooth dynamics. Therefore, as the number of variables and time steps increases, they suffer from the curse of dimensionality. Thus, there is still a need to develop methods with less computational complexity.

Analysis of known optimal solutions (\cite{Zabczyk:1996}, \cite{FilUn:2004}, \cite{AstWitt:1995}, \cite{Tse:1974}, \cite{BshTse:1976}, \cite{Bania:2017}), suggests that active exchange of information between controller and the system is distinctive feature of the optimal controllers (cf. \cite{Bania:2018}).
Relationships between control of dynamical systems and available information are fundamental for understanding of stochastic control theory. Since the pioneering work of \cite{Feldbaum:1965} the connections between control and information theory are intensively studied. \cite{Hijab:1984} showed that concept of entropy appears naturally in dual control. The entropic formulation of stochastic control has been given by \cite{Saridis:1988} and \cite{Tsai:1992}. The works of \cite{Banek:2010} and \cite{BanKoz:2011} suggests that information exchange, entropy reduction and stochastic optimality are related to each other. An information and entropy flow in control systems has been analysed in the papers of \cite{MitNew:2005} and \cite{SagUed:2013}. The controllability, observability and stability of linear control systems with limitations of information contained in the measurements were investigated by \cite{TatMit:2004}. \cite{TL:2004} showed that controllability and observability can be defined using the concepts of information theory. One of the most relevant results related to the subject of this article is the inequality of \cite{TL:2000}. They proved that the one-step reduction in entropy of the final state is upper bounded by the mutual information between the control variables and the current state of the system. \cite{DS:2013} suggested how to extend this result to more general cost functions. 

The main contribution of this paper is as follows. First, the open and closed-loop strategy is defined in terms of mutual information between the system trajectory and control variables.  Next, it has been proved under relatively weak assumptions, that 
$$J_{open}-J_{closed}(\varphi)\leq \rho(I(X;U|\varphi)),\eqno (0)$$
where,   $J_{open}$ is the expectation of the cost corresponding to the best open-loop control, $J_{closed}$ is the expectation of the cost corresponding to any closed-loop strategy $\varphi$ and $I(X;U|\varphi)$ is the mutual information between the system trajectory and control variables under the strategy $\varphi$. Function $\rho$ is non-decreasing and $\rho(0)=0$. Additionally, we prove, that under slightly stronger assumptions, $\rho$ is bounded by linear function.  Hence the condition $I(X;U)>0$, is necessary for reduction of the cost below the best open-loop cost. On the basis of inequality (0), an \textit{Information Based Control} (IBC) has been proposed for finding an approximate solution of stochastic control problems. The phrase "approximate solution" means that the proposed method is able to find strategy no worse than \textit{open-loop feedback optimal} (OLFO) algorithm given by \cite{Tse:1974}.    
The main idea consists in replacing the original control task by a sequence of control problems that are relatively easy to solve and such that condition $I(X;U)>0$, can be fulfilled. This can be done by introducing a penalty function for information deficiency. As a penalty function, the predicted mutual information between the system trajectory and the measurements has been used. Similar idea has been proposed by \cite{Alpcan:2013}, however, in this article, the process noise (input disturbances) has been completely ignored, which is very strong and often unrealistic assumption. 
Additional contributions include sufficient conditions for existing the bounds of type (0), one step information-theoretic bound for quadratic cost and two examples of the operation of IBC. In both examples, the optimal solution has been found analytically by DP and then compared with the IBC solution. It has been shown, that IBC is able to find an optimal solution without using DP, which is the main advantage of the proposed method.    

The rest of the paper is organised as follows. Section 2 formulates the stochastic control problem. An information-theoretic lower bounds of the cost are given in section 3. Section 4 presents IBC and section 5 contains an examples of its application. Monte Carlo approximation of the cost function and some computational issues are discussed in section 6. Paper ends with conclusions and list of references.

\textbf{Notation.} Abbreviation $\xi \sim p_{\xi}$ means that variable $\xi$ has a density $p_{\xi}(\xi)$. Symbol $\xi\sim N(m,S)$  means that $\xi $ has normal distribution with mean \textit{m} and covariance \textit{S}. If $S>0$ then the density of normally distributed variable is denoted by $N(x,m,S)=(2\pi) ^{ -\tfrac{n}{2}} |S|^{ -\tfrac{1}{2}} \exp (-0.5(x-m)^{T} S^{ -1}(x-m)) $. Symbol $\mathrm{col}(a_{1} ,a_{2} ,...,a_{n} )$,  denote the column vector. Trace of matrix \textit{A} is denoted by $\mathrm{tr}(A) $. The inner product of matrices $A$ and $B$ is defined as $\langle A, B \rangle=\mathrm{tr}(A^TB)$. Let $ \xi\in R^{n} $ and let \textit{Q} be square matrix of size \textit{n}. Quadratic form $\xi^{T}Q\xi $ is denoted by $ |\xi |_{Q}^{2} $. 
Entropy of the variable $\xi$ is denoted by $H(\xi)$. Control strategy is denoted by $\varphi$. Symbol $H(\xi|\varphi)$ means that entropy of the variable $\xi$ is calculated with a fixed strategy $\varphi$. 

\section{Stochastic control task} Let us consider following stochastic system
\begin{align}
& x_{k+1} =f(x_k,u_k,w_k),k=0,1,...,\\
& y_{k} =h(x_k,v_k),\\
& u_{k} \in U_{ad}, U_{ad}=\lbrace {u\in R^{r} :u_{\min } \leq u\,\leq u_{\max } } \rbrace, 
\end{align}
where $x_{k} \in R^{n}$, $y_{k} \in R^{m}$, $w_{k} \in R^{n_{w}}$, $v_{k} \in R^{n_v}$, $w_{k} \sim p_w$, $v_{k} \sim p_v$. The inequalities in (3) are elementwise. It's also possible in some justified cases that $U_{ad}=R^r$.
Functions $f,h$ are $C^2$ wrt. all their arguments. The initial distribution of $x_0$ is denoted by $p_0^-(x_0)$. Variables $x_0, w_{0}, w_{1},... , w_{k}, v_{0}, v_{1},..., v_{k}$ are mutually independent for all $ k $. Measurements until time $k $  are denoted by $Y_{k} =\mathrm{col}(y_{0},y_{1} ,...,y_{k} )\in R^{m(k+1)}$. Similarly $X_{k} =\mathrm{col}(x_{0} ,x_{1} ,...,x_{k} )\in R^{ n(k+1)} $, $U_{k} =\mathrm{col}(u_{0},u_{1} ,...,u_{k} )\in R^{r(k+1)}$. The control horizon is denoted by $N \geq 1 $. We also introduce the following abbreviations $Y=Y_{N-1}, U=U_{N-1}, X=X_{N-1}$.

Let $B(R^{Nm},R^{Nr})$ be the set of all bounded maps form $R^{Nm}$ into $R^{Nr}$. If $f_1, f_2\in B$, $\alpha, \beta \in R$ then $\alpha f_1+\beta f_2 \in B$. Hence $B$ is linear space. The set $B$ with the norm 
$\Vert f \Vert _{\mathbf{B}}= \sup_{Y\in R^{Nm}} \Vert f(Y)\Vert _{R^{Nr}}$,
is a Banach space, which will be denoted by $\mathbf{B}$ and call the space of strategy. The measurable map
\begin{equation}
\varphi_{k} :R^{m(k+1)} \rightarrow U_{ad}, u_{k} =\varphi _{k} (Y_{k}),
\end{equation}
is control strategy at time $k$. Let $U_{ad}^N=(U_{ad}\times U_{ad}\times ,....,\times U_{ad})_{N \mathrm{times}}$. The map 
\begin{equation}
\varphi:R^{mN} \rightarrow U_{ad}^N\subset R^{Nr}, U =\varphi(Y),
\end{equation}
where
\begin{equation}
\varphi(Y)=\mathrm{col}(\varphi_0(Y_0), ...,\varphi_{N-1}(Y_{N-1})).
\end{equation} 
is admissible control strategy. The set of all admissible strategies is denoted by $S_{ad}$. It follows from (4-6) that $S_{ad}$ is bounded, closed and convex subset of $\mathbf{B}$.
Let $L:R^n\rightarrow R$ be measurable $C^2$ function and let $J: S_{ad} \rightarrow R$ denote the cost functional. We are looking for a strategy $\varphi \in S_{ad}$, that minimizes the functional
\begin{equation}
J(\varphi)=E\lbrace L(x_N)|\varphi\rbrace,
\end{equation} 
where the expectation is calculated wrt. $x_{0}$, $w_{0},..,w_{N-1}$, $v_{0} ,...,v_{N-1}$. The optimal strategy will be denoted by $\varphi^{\ast}$ and the abbreviation $J(\varphi^{\ast})=J^{\ast}$ will be used. We will assume that $\varphi^{\ast}$ exists. Optimal control corresponding to realization of $Y_{k}$  will be denoted by $u_{k}^{\ast } =\varphi_k^{\ast } (Y_k)$.
\section{Information-theoretic lower bounds of the cost function}

If the strategy $\varphi\in S_{ad}$ is fixed, then relations between random variables $X, Y, U$ are described by their joint density $p(X, Y, U|\varphi)$. In particular, if $p(X,U|\varphi)=p(X|\varphi)p(U|\varphi)$, then $X$ and $U$ are independent and information contained in measurements $Y$ is not utilized. This is open-loop control strategy. Reduction of the cost (7), compared to the open-loop, is possible only if $X$ and $U$ are dependent. The natural measure of dependency is mutual information. We will show below that the cost  (7) is lower-bounded by some non-increasing function of mutual information between $X$ and $U$.

%Hence the cost (7) should be lower bounded by some decreasing function of mutual information between $X$ and $U$. Below we will analyse some of such a bounds. 
\subsection{General bounds}
The mutual information between $X$ and $U$ is given by
\begin{equation}
I(\varphi)=H(X|\varphi)-H(X|U,\varphi),
\end{equation}
where the entropies $H(X|\varphi)$, $H(X|U,\varphi)$, are defined in usual way i.e.
\begin{align}
&H(X|\varphi)= E(-\mathrm{ln}p(X|\varphi)),\\
&H(X|U,\varphi)=E(-\mathrm{ln}p(X|U,\varphi)).
\end{align}
 The expected value in (10) is calculated wrt.  $X$ and $U$.
\begin{defi}{}
The strategy $\varphi$ is an open-loop strategy if, and only if, $I(\varphi)=0$. Otherwise $\varphi$ will be called closed-loop or feedback strategy.
 \label{definition1}
\end{defi}

Let $s\in R, s\geq0$. The set
\begin{equation}
\Omega(s)=\lbrace \varphi\in S_{ad}:I(\varphi)\leq s\rbrace
\end{equation} 
contains all strategies for which the information $I(\varphi)$ is not greater than $s$. Let 
$\varphi\in S_{ad}$ be the constant map. Since $\varphi$ is constant then $U$ and $Y$ are independent and $I(\varphi)=0$. Hence $\Omega(s)$ is non-empty for all $s \geq 0$. Consider now a family of optimization problems
\begin{equation}
\inf_{\varphi\in\Omega(s)}J(\varphi).
\end{equation}
The optimal solution of (12) will be denoted by $\varphi_s^\ast$ and it is assumed that $\varphi_s^\ast$ exist for all $s$. The minimum open-loop cost is defined as
\begin{equation}
J_o=\inf_{\varphi\in\Omega(0)}J(\varphi).
\end{equation}     
\begin{lem}{} If the solution of (12) exists for all $s\geqslant 0$, then 
there exist non-decreasing, bounded function 
$\rho:[0,\infty)\rightarrow [0,J_o-J^{\ast}],$
$\rho(0)=0,$  
such that inequality
\begin{equation}
J_o-J(\varphi) \leq \rho (I(\varphi))
\end{equation}
holds for all $\varphi\in S_{ad}$.
\end{lem}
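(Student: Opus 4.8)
The plan is to build $\rho$ directly from the value function of the constrained family (12). I would set $V(s)=\inf_{\varphi\in\Omega(s)}J(\varphi)=J(\varphi_s^\ast)$ for $s\geq 0$, which is well defined by the standing assumption that (12) admits a solution for every $s$. The core observation is the monotonicity of the feasible sets: if $0\leq s_1\leq s_2$, then every strategy with $I(\varphi)\leq s_1$ also satisfies $I(\varphi)\leq s_2$, so $\Omega(s_1)\subseteq\Omega(s_2)$, and taking the infimum over a larger set can only decrease the value. Hence $V$ is non-increasing on $[0,\infty)$, and $V(0)=J_o$ by definition (13).

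Next I would define $\rho(s)=J_o-V(s)$ and verify the required properties. Since $V$ is non-increasing, $\rho$ is non-decreasing, and $\rho(0)=J_o-V(0)=0$. For the range I would use two inclusions: $\Omega(0)\subseteq\Omega(s)$ gives $V(s)\leq V(0)=J_o$, hence $\rho(s)\geq 0$; and $\Omega(s)\subseteq S_{ad}$ gives $V(s)\geq\inf_{\varphi\in S_{ad}}J(\varphi)=J^{\ast}$, hence $\rho(s)\leq J_o-J^{\ast}$. Thus $\rho$ maps $[0,\infty)$ into $[0,J_o-J^{\ast}]$, as claimed.

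Finally, the inequality (14) for an arbitrary $\varphi\in S_{ad}$ follows from a self-membership trick: putting $s=I(\varphi)$, the strategy $\varphi$ trivially satisfies $I(\varphi)\leq s$, so $\varphi\in\Omega(I(\varphi))$ and therefore $J(\varphi)\geq\inf_{\psi\in\Omega(I(\varphi))}J(\psi)=V(I(\varphi))$. Subtracting from $J_o$ yields $J_o-J(\varphi)\leq J_o-V(I(\varphi))=\rho(I(\varphi))$, which is exactly (14).

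I expect the genuine content to lie not in any hard estimate but in the correct packaging: recognizing that the constrained value function $V$ is monotone, and that every strategy is feasible for its own information level. The only points needing a little care are the two set inclusions $\Omega(0)\subseteq\Omega(s)\subseteq S_{ad}$ that pin down the range of $\rho$, and the legitimacy of evaluating the infimum at the specific level $s=I(\varphi)$; both are elementary, but they are precisely where the hypothesis that (12) is solvable for all $s$ is used. No measurability or compactness arguments beyond that hypothesis appear to be needed.
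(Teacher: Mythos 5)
Your proof is correct and is essentially the paper's own argument: your $\rho(s)=J_o-V(s)$ is identical to the paper's definition $\rho(s)=\sup_{\varphi\in\Omega(s)}(J_o-J(\varphi))$, and the three steps (monotonicity of the feasible sets, $\rho(0)=0$ from (13), and the self-membership $\varphi\in\Omega(I(\varphi))$) match exactly. Your explicit verification that the range lies in $[0,J_o-J^{\ast}]$ is a small addition the paper leaves implicit, but it does not change the route.
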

\begin{proof}{} Let us define
\begin{equation}
\rho(s)=\sup_{\varphi\in\Omega(s)}\left( J_o-J(\varphi)\right).
\end{equation}
For every $t,s\geq0$ we have $\Omega(s)\subset \Omega(s+t)$ hence $\rho$ is non-decreasing. If $s=0$ then by the formula (13) we have
$$\rho(0)=\sup_{\varphi\in\Omega(0)}\left( J_o-J(\varphi)\right) =J_o-J_o=0. $$
Since $\varphi\in \Omega(I(\varphi))$, then
\begin{equation}
J_o-J(\varphi)\leq\sup_{\psi\in\Omega(I(\varphi))}\left( J_o-J(\psi)\right)=\rho(I(\varphi)),
\end{equation}
which proves (14).
\end{proof}
It follows from (14) that $J(\varphi)<J_o\Rightarrow I(\varphi)>0$, but function $\rho$ in (14) can be very irregular. To obtain more accurate bound, additional conditions are needed. Let 
\begin{equation}
d(\Omega(0), \varphi)=\inf_{\psi\in \Omega(0)}||\psi-\varphi||,
\end{equation}
denote the distance between $\Omega(0)$ and $\varphi$. 
\begin{twr}{}
If there exists numbers $L_I, L_J>0$, such that
\begin{align}
&|J(\varphi)-J(\varphi_1)|\leqslant L_J||\varphi_1-\varphi||, \varphi, \varphi_1\in S_{ad},\\
&I(\varphi)\geqslant L_Id(\Omega(0),\varphi), \varphi\in S_{ad},
\end{align}
then there exist number $q>0$, that
\begin{equation}
J_o-J(\varphi)\leqslant qI(\varphi), \varphi\in S_{ad}.
\end{equation}
\end{twr}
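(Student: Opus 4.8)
The plan is to chain the two hypotheses through the definition of the open-loop distance, arriving at the linear bound with the explicit constant $q = L_J/L_I$. The whole argument is a short sequence of inequalities; the only care needed is in the order in which the infimum over open-loop strategies is taken.

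First I would fix an arbitrary $\varphi \in S_{ad}$ and an arbitrary open-loop strategy $\psi \in \Omega(0)$. Since $J_o$ is by definition (13) the infimum of $J$ over $\Omega(0)$, we have $J_o \leq J(\psi)$, and therefore
$$J_o - J(\varphi) \leq J(\psi) - J(\varphi).$$
Applying the Lipschitz hypothesis (18) to the right-hand side turns this into
$$J_o - J(\varphi) \leq L_J \|\psi - \varphi\|.$$

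The key observation is that the left-hand side does not depend on $\psi$, so this inequality holds simultaneously for every $\psi \in \Omega(0)$. I would then pass to the infimum over $\psi \in \Omega(0)$ on the right-hand side; since $L_J > 0$ is constant, this infimum equals $L_J$ times the infimum of $\|\psi - \varphi\|$, which by the definition (17) of $d(\Omega(0), \varphi)$ gives
$$J_o - J(\varphi) \leq L_J \, d(\Omega(0), \varphi).$$

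Finally, hypothesis (19) rearranges to $d(\Omega(0), \varphi) \leq I(\varphi)/L_I$, and substituting this bound yields $J_o - J(\varphi) \leq (L_J/L_I)\, I(\varphi)$, so that setting $q = L_J/L_I > 0$ establishes (20). There is no serious obstacle: the real content of the theorem is carried by the two hypotheses, and the only subtle point in the derivation is recognising that one must bound $J_o - J(\varphi)$ by $L_J \|\psi - \varphi\|$ \emph{uniformly} in $\psi$ before taking the infimum. Reversing this order, or attempting to select a single nearest open-loop strategy (which need not be attained), would break the argument; passing to the infimum of a uniform upper bound avoids any such existence issue.
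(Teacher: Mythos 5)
Your proof is correct and follows the same basic chain as the paper's: both arguments route the bound through the distance $d(\Omega(0),\varphi)$, using (18) to control $J_o-J(\varphi)$ by that distance and (19) to control the distance by $I(\varphi)$. The difference is in execution, and yours is actually the more careful of the two. The paper's proof begins by selecting $\varphi_1\in\Omega(0)$ with $\|\varphi_1-\varphi\|=d(\Omega(0),\varphi)$, i.e.\ it assumes the infimum in (17) is attained; this is not guaranteed, since $\mathbf{B}$ is infinite-dimensional, $\Omega(0)$ need not be compact, and the paper itself remarks that $I(\varphi)$ need not be continuous (so $\Omega(0)$ need not even be closed). Your version sidesteps this exactly as you describe: bound $J_o-J(\varphi)\leq L_J\|\psi-\varphi\|$ uniformly over $\psi\in\Omega(0)$ and only then pass to the infimum. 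You also obtain the explicit constant $q=L_J/L_I$ directly, whereas the paper verifies the inequality for any $q$ with $qL_I-L_J\geq 0$ (the same threshold) via the equivalent reformulation $J(\varphi)+qI(\varphi)\geq J_o$, and handles the case $\varphi\in\Omega(0)$ separately — a case your argument absorbs automatically since then $d(\Omega(0),\varphi)=0$. In short: same idea, but your write-up closes a small existence gap in the published proof.
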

\begin{proof}{}
Let $\varphi\not\in\Omega(0)$ and let $\varphi_1\in\Omega(0)$ be such that $||\varphi_1-\varphi||=d(\Omega(0), \varphi)$. If $qL_I-L_J\geqslant 0$, then on the basis of (18), (19) and (13) we get
\begin{equation}
\begin{split}
&J(\varphi)+q I(\varphi)\geqslant J(\varphi_1)-L_J||\varphi_1-\varphi||+q L_Id(\Omega(0), \varphi)=\\
&J(\varphi_1)+(q L_I-L_J)||\varphi_1-\varphi||\geqslant J(\varphi_1)\geqslant J_o, \varphi\not\in\Omega(0).
\end{split}
\end{equation}
If $\varphi\in\Omega(0)$, then $I(\varphi)=0$ and it follows from (13) that $J(\varphi)\geqslant J_o$. Hence (20) holds for all $\varphi\in S_{ad}$.
\end{proof}
\textbf{Remark.} Data processing inequality (\cite{Cover:2006}, p.34) says that $I(X;F(Y))\leq I(X;Y)$, for any function $F$. Since  $U=\varphi(Y)$ then
\begin{equation}
I(\varphi)=I(X;U|\varphi)\leqslant I(X;Y|\varphi).
\end{equation}
As a consequence, lemma 1 and theorem 1 will still be true if we use $I(X;Y|\varphi)$, instead of $I(\varphi)$.

Since $S_{ad}$ is bounded and closed then Lipschitz continuity assumption (18) is not very restrictive. The assumption (19) says that
information must grow linearly with the distance from the set $\Omega(0)$, which seems quite natural and not very restrictive. Let us also note, that $I(\varphi)$ need not to be continuous.
\subsection{The entropy reduction of the final state}
Let us assume that the cost functional has the form 
\begin{equation}
J(\varphi)=E\lbrace-\mathrm{ln}p(x_N|\varphi)\rbrace.
\end{equation}
We will call $J(\varphi)$ the closed-loop entropy and we will write  $H(\varphi)=J(\varphi)$. The minimum open-loop entropy of the final state is denoted by $H_{o}= J(\varphi_0^{\ast})$. Touchette \& Lloyd \cite{TL:2000}, \cite{TL:2004}, showed that one-step (i.e. $N=1$) entropy reduction compared to the best open-loop strategy is upper bounded by ${I}(x_0;u_0|\varphi)$. Their inequality (in our notation), has the form
\begin{equation}
H_o-H(\varphi)\leqslant{I}(\varphi), \varphi\in{S_{ad}}.
\end{equation} 
It is fundamental limitation in control systems, but unfortunately, the multi-step ($N>1$) version of (24) is very weak (cf. \cite{Touch:2000}, p. 47, equation (3.74)). It only says, that there \textbf{exist} strategy $\varphi$, that  
\begin{equation}
H_o-H(\varphi)\leqslant\sum\limits_{k=0}^{N-1}{I}(x_k;u_k|\varphi).
\end{equation}
Since correlations between previous measurements and current control are omitted in (25), it may not be fulfilled for some $\varphi$. However, it is still possible on the basis of (24), to construct some one-step bound for (7). 
\begin{twr}{}
Let $J(\varphi)=E\lbrace L(x_1)|\varphi\rbrace$, $x_1\in{R^n}$. If $L(x_1)\geqslant{c}|x_1|^2$, $c>0$, then
\begin{equation}
J(\varphi)\geqslant {cn}(2\pi{e})^{-1}e^{2n^{-1}(H_o-I(\varphi))}.
\end{equation}
\end{twr}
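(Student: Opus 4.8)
The plan is to obtain (26) by chaining three inequalities: the pointwise quadratic minorant of the running cost $L$, a maximum-entropy estimate that bounds the second moment of $x_1$ from below by an exponential of its differential entropy, and finally the one-step Touchette--Lloyd inequality (24).

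First I would discard the detailed shape of $L$ and retain only the quadratic lower bound. Since $L(x_1)\geqslant c|x_1|^2$ pointwise, monotonicity of the expectation gives
$$J(\varphi)=E\{L(x_1)|\varphi\}\geqslant c\,E\{|x_1|^2\,|\,\varphi\}.$$
The task then reduces to bounding the conditional second moment $E\{|x_1|^2\,|\,\varphi\}$ below by an increasing function of the final-state entropy $H(\varphi)=H(x_1|\varphi)$ from (23).

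The crucial step is the maximum-entropy principle (\cite{Cover:2006}). Among all densities on $R^n$ with a prescribed second moment $s=E\{|x_1|^2\}$, the isotropic Gaussian $N(0,(s/n)I)$ has the largest differential entropy, equal to $\tfrac{n}{2}\ln(2\pi e\,s/n)$; equivalently, comparing $p(x_1|\varphi)$ with this Gaussian through the nonnegativity of relative entropy yields
$$H(\varphi)\leqslant \frac{n}{2}\ln\!\left(\frac{2\pi e}{n}\,E\{|x_1|^2\,|\,\varphi\}\right).$$
Exponentiating and solving for the second moment I would obtain
$$E\{|x_1|^2\,|\,\varphi\}\geqslant \frac{n}{2\pi e}\,e^{2H(\varphi)/n}.$$

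It remains to eliminate $H(\varphi)$ in favour of $H_o$ and $I(\varphi)$. Because the cost here depends only on $x_1$ (the case $N=1$), inequality (24) applies and gives $H(\varphi)\geqslant H_o-I(\varphi)$, whence $e^{2H(\varphi)/n}\geqslant e^{2(H_o-I(\varphi))/n}$. Concatenating the three estimates produces
$$J(\varphi)\geqslant c\,E\{|x_1|^2\,|\,\varphi\}\geqslant \frac{cn}{2\pi e}\,e^{2H(\varphi)/n}\geqslant cn(2\pi e)^{-1}e^{2n^{-1}(H_o-I(\varphi))},$$
which is exactly (26). The only point requiring care is the maximum-entropy inequality: one must assume the entropy is well defined and the second moment finite, so that the Gaussian comparison is legitimate. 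This is standard and not a genuine obstacle; the real content of the theorem is the recognition that a quadratic cost is controlled, through the second moment, by the differential entropy of $x_1$, which in turn is governed by the information $I(\varphi)$ via the Touchette--Lloyd bound.
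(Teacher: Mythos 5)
Your proof is correct and follows essentially the same route as the paper: quadratic minorant of $L$, a maximum-entropy lower bound on the second moment of $x_1$ in terms of $e^{2H(\varphi)/n}$, and the Touchette--Lloyd inequality (24) to replace $H(\varphi)$ by $H_o-I(\varphi)$. The only cosmetic difference is that you compare directly with the isotropic Gaussian of matching second moment, which merges into one step the paper's pair of intermediate estimates $\mathrm{tr}(S)\geqslant n|S|^{1/n}$ and $|S|\geqslant(2\pi e)^{-n}e^{2H(\varphi)}$.
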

\begin{proof}{}
Let $S=\mathrm{cov}(x_1,x_1|\varphi)$. Matrix $S$ fulfils the inequality $\mathrm{tr}(S)\geqslant n|S|^{\tfrac{1}{n}}$, (\cite{Cover:2006}, thm. 17.9.4, p. 680). Since Gaussian distribution maximizes entropy over all distributions with the same covariance, then it can be proved that $|S|\geqslant (2\pi e)^{-n}e^{2H(\varphi)}$, (\cite{Cover:2006}, thm. 8.6.5, p. 254). On the basis of these two inequalities and by using (24) one can obtain
\begin{equation*}
J(\varphi)\geqslant cE|x_1|^2\geqslant c\mathrm{tr}(S)\geqslant cn|S|^{\tfrac{1}{n}}
\geqslant cn(2\pi e)^{-1}e^{2n^{-1}H(\varphi)}\geqslant cn(2\pi e)^{-1}e^{2n^{-1}(H_o-I(\varphi))}.
\end{equation*}
\end{proof}
\subsection{Elementary example}
To illustrate the problem, let us consider one-dimensional system
\begin{equation}
x_1=x+u, y=x+v.
\end{equation}
Variables $x$ and $v$ are Gaussian i.e. $x\sim N(0,s_x), s_x>0$, $v\sim N(0,s_v), s_v>0$.
The cost functional has the form
\begin{equation}
J(\varphi)=E\lbrace x_1^2|\varphi\rbrace.
\end{equation}
The best open-loop strategy is $\varphi_{0}^{\ast}=0$ and $J_o=s_x$. The optimal strategy is given by linear function of $y$
\begin{equation}
\varphi^{\ast}(y)=-\frac{s_x}{s_x+s_v}y
\end{equation}
and the minimum cost is equal to 
\begin{equation}
J(\varphi^{\ast})=\frac{s_xs_v}{s_x+s_v}<s_x=J_o.
\end{equation}
Since $x_1$ is Gaussian then its open-loop entropy is given by $H_o=\tfrac{1}{2}\ln (2\pi eJ_o)$ and the inequality (26) yields
\begin{equation}
J(\varphi)\geqslant J_oe^{-2I(\varphi)},
\end{equation}
for all $\varphi$. One can check by direct calculation that
\begin{equation}
I(\varphi^{\ast})=\tfrac{1}{2}\mathrm{ln}\left( 1+\frac{s_x}{s_v} \right)
\end{equation} 
and then $J(\varphi^{\ast})=J_oe^{-2I(\varphi^{\ast})}$. Hence the bound (31) is tight. The entropy of $x_1$, under optimal strategy, is given by $H(\varphi^{\ast})=\tfrac{1}{2}\ln (2\pi eJ(\varphi^{\ast}))$ and one can check that $H_o-H(\varphi^{\ast})=I(\varphi^{\ast})$. Hence, 
the strategy (29) is also optimal for entropy reduction
\section{Information based control}
Minimum of $J(\varphi)$ can be found by dynamic programming (DP), but computational complexity of DP grows exponentially with the number of time steps and control variables. As a consequence, DP is often impractical and there is a need to construct approximate methods with lower computational complexity (cf. \cite{FilUn:2004}, pp. 14-32, \cite{AstWitt:1995}, pp. 354-370.). It is possible, on the basis of the previous section, to construct such an approximate method. The easiest way to simplify the problem is to replace the original control task with the sequence of open-loop control problems. These control problems consists in minimization of 
\begin{equation}
J_{k} (u^{(k)})=E\lbrace L(x_N)\vert Y_k, u^{(k)}\rbrace,
\end{equation}
where $u^{(k)}=\mathrm{col}(u_{k} ,...,u_{N-1})$, denote the future control sequence. Minimizer of (33) will be denoted by $\bar{u}^{(k)}(Y_k)$. To control the system, only the first element of $\bar{u}^{(k)}$ is used and the procedure is repeated in subsequent steps. Hence, the control strategy generated by sequential minimization of (33) has the form
\begin{equation}
\varphi_k(Y_k)=\bar{u}^{(k)}_1(Y_k)
\end{equation}
and this may or may not be a feedback in the sense of definition 1. The above simplification is known as \textit{Open Loop Feedback Optimal} (OLFO) and it is well known that OLFO does not generate information and can not be optimal, except linear Gaussian systems (cf. section 3.3, example 2 below, \cite{Tse:1974}, \cite{FilUn:2004}). On the other hand, it follows from section 3 and particularly from (20) and (22), that 
\begin{equation}
J(\varphi)\geqslant{J_o}-qI(X;Y|\varphi),
\end{equation}
which implies, that every controller better than open-loop, must actively generate information. Generating of information can be enforced by adding to (33), a penalty function for information deficiency. Such penalty function can be constructed by using the mutual information between future states and measurements. It is also possible to use $I(X;U)$ as a penalty, however, calculation of $I(X;U)$ is much more difficult than calculation of $I(X;Y)$. Therefore it is computationally more convenient to use $I(X;Y)$. This is basic idea of the \textit{Information Based Control} (IBC). Practically realizable implementation of the IBC is as follows. Let $X_{k}^{+} =\mathrm{col}(x_{k+1} ,...,x_{N-1})$, $Y_{k}^{+} =\mathrm{col}(y_{k+1} ,...,y_{N-1})$, denote the future states and observations. Let us define for $k=0,1,..., N-2$ 
\begin{equation}
I_{k} (u^{(k)}|Y_{k})=\int p(X_k^+,Y_{k}^{+}\vert Y_k)\mathrm{ln}\frac{p(X_k^+,Y_{k}^{+}\vert Y_k)}{p(X_k^+\vert Y_k)p(Y_{k}^{+}\vert Y_k)} dX_k^+dY_{k}^{+}.
\end{equation}
This is the mutual information between $X_k^+$  and $Y_{k}^{+}$, predicted at time $k$ and conditioned on $Y_k$. Since $y_N$ is irrelevant from the control point of view then one can assume that $I_{N-1}=0.$ Now, at every time instant we are looking for the minimum of the functional
\begin{equation}
J_{k} (u^{(k)})=E\lbrace L(x_N)\vert Y_k\rbrace -\nu_{k} I_{k} (u^{(k)}\vert Y_{k} ),
\end{equation}
where 
\begin{equation}
u_{i}^{(k)} \in{U}_{ad}, \nu_{k} \geq 0, k=0,\,...,\,N-1, N\geq 2.
\end{equation}
The expectation in (37) is calculated wrt. $x_{0}$ and $w_{k} ,...,w_{N-1}$, but not with reference to $v_{k} ,...,v_{N-1}$, which substantially simplifies the problem. Minimizer of (37) will be denoted by $\bar{u}^{(k)} $. To control the system only the first element of $\bar{u}^{(k)}$ is used and whole procedure is repeated in subsequent steps. Let us note that $\bar{u}^{(k)}$ depends on $Y_k$ as required in (4). As a consequence $X$ depends on $U$ and it's possible that IBC generates feedback strategy in the sense of definition 1. Minimizer of (37) can be considered as compromise between open-loop control (first term) and learning (second term). The intensity of learning is given by $\nu_k$. If $\nu_k=0$ then IBC becomes \textit{Open-Loop Feedback} strategy, which is generally not optimal.

\textbf{Remark.} If the system (1), (2) is linear and the disturbances are additive Gaussian white noises then mutual information in (37) does not depend on control (cf. \cite{Bania:2018}, theorem 3.1). As a consequence, application of the IBC to linear Gaussian systems with quadratic cost gives well known result i.e. Kalman filter and LQ controller.

\section{Examples}
\subsection{Example 1}
To illustrate the main idea of the IBC, let us start from the very simple example of the integrator with unknown gain. Let 
\begin{equation}
x_{k+1}=x_k+\theta u_k, y_k=x_k, \theta\in\lbrace-1,1\rbrace, x_0=1.
\end{equation}
The cost function is given by
$$ J(\varphi_0,\varphi_1)=E(x_2^2) $$
The initial distribution of $\theta$ has the form $P(\theta=-1)=p$, $P(\theta=1)=1-p$, $p\in[0,1]$. Since $\theta$ can be treated as second component of the state vector then (39) can be viewed as a special case of (1) and (2).\\
    
\textbf{The optimal solution}, obtained by dynamic programming, has the form
\begin{equation}
\varphi_0^{\ast}\neq0, \varphi_1^{\ast}(y_1)=\frac{\varphi_0^{\ast}y_1}{1-y_1}.
\end{equation}
%$$ \varphi_0^{\ast}\neq0, \varphi_1^{\ast}(y_1)=\frac{\varphi_0^{\ast}y_1}{1-y_1}. \eqno(60)$$
It follows from (39) and (40) that $x_2=0.$ Hence the minimal value of the cost $J^{\ast}=0$. The observation $y_1$ contain an information about $\theta$ if, and only if, $u_0\neq 0$. Hence $I(y_1;\theta)>0$ if, and only if, $u_0\neq 0$. \\

\textbf{The information based solution.} Let $\nu_0=1$. According to (37), at the first step the following cost  
$$J(u_0,u_1)=E(x_2^2\vert y_0)-I(y_1;\theta) $$
should be minimized. Calculation of the expectation gives 
$$ J(u_0,u_1)=(u_0+u_1)^2+2(1-2p)(u_0+u_1)+1-I(y_1;\theta). $$
We know that $ I(y_1,\theta)>0 $ if, and only if, $u_0\neq0$, hence the optimal solution at the first step
$$u_0\neq0, u_1=2p-1-u_0. $$
In the second step we minimize
$$J(u_1)=E\lbrace x_2^2\vert (y_0, y_1)\rbrace=(y_1+\hat{\theta}u_1)^2$$
where
$$ \hat{\theta}=\frac{y_1-1}{u_0},$$
denote estimate of $\theta$, obtained on the basis  of $u_0$ and $y_0$.
Minimization gives
$$ u_1=\frac{u_0y_1}{1-y_1},	$$
which is just exactly the optimal solution given by (40). Thus, the IBC method allowed us to find optimal solution, without using dynamic programming. 
\subsection{Example 2}
Due to the various modelling inaccuracies, in real life applications the
parameters are not constant, but they are rather a stochastic processes. As an example of the system with
parametric noise we will first consider one-dimensional deterministic system
\begin{equation}
\dot{\eta}(t)=-a_{c}\eta (t)+(b_c+\epsilon (t))u(t)+g_{2c}\zeta (t),
\end{equation}
where $\epsilon (t)$ and $\zeta (t) $ represents changes of the gain and the input disturbances respectively. The control input is denoted by $u(t)\in R$. If we assume that $\epsilon$ is Wiener process and $\zeta$ is white noise, then (41) can be written as a system o two Ito equations
\begin{equation}
dx=(A_c(u)x+B_cu)dt+G_cdw,
\end{equation}
\begin{equation}
A_c(u)=\begin{bmatrix}
0&0\\
u&-a_{c}
\end{bmatrix}, 
B_c=\begin{bmatrix}
0\\
b_c
\end{bmatrix},
G_c=\begin{bmatrix}
g_{1c}&0\\
0&g_{2c}
\end{bmatrix}.
\end{equation}
Processes $w_1(t)$ and $w_2(t)$ are mutually independent standard Wiener processes. Parameters $a_c, b_c, g_{1c}, g_{2c}$, are positive numbers. Observation equation has the form
\begin{equation}
y_k=x_2(t_k)+v_k, k=0,1,2, ...,
\end{equation}
where
$v_k=N(0,s_v), s_v>0$, $t_k=kT_0$, $T_0>0$.  If control is piecewise constant i.e. $u(t)=u_k, t\in [t_k, t_{k+1})$, then discrete-time version of (42) and (44) is given by
\begin{align}
&x_{k+1}=A(u_k)x_k+Bu_k+\sqrt{D(u_k)}w_k,\\
&y_{k}=Cx_k+v_k,
\end{align}
where
\begin{align}
&A(u_k)=A_0+A_1u_k,\\
&D(u_k)=D_0+D_1u_k+D_2u_k^2,
\end{align} 
\begin{align}
&A_0=\begin{bmatrix}
a_1&0\\
0&a_2
\end{bmatrix},
A_1=\begin{bmatrix}
0&0\\
a_3&0
\end{bmatrix},\\
&D_0=\begin{bmatrix}
d_1&0\\
0&d_3
\end{bmatrix},
D_1=\begin{bmatrix}
0&d_2\\
d_2&0
\end{bmatrix},\\
&D_2=\begin{bmatrix}
0&0\\
0&d_4
\end{bmatrix},
B=\begin{bmatrix}
0\\
b
\end{bmatrix},
C=\begin{bmatrix}0&1
\end{bmatrix}.
\end{align}
The matrices $A, B, D$ can be calculated by using the well-known discretization rules:
\begin{equation*}
A=e^{A_c T_0}, B=\int\limits_0^{T_0} e^{A_c\tau }B_cd\tau, D=\int\limits_0^{T_0} e^{A_c\tau }G_c^2e^{A_c^T\tau }d\tau.
\end{equation*}
The input noise is a sequence of mutually independent Gaussian random variables i.e. $w_k\sim N(0,I_{2x2})$, where $I_{2x2}$ denote  identity matrix of size 2. The initial condition is given by $x_0\sim N(m_0^-,S_0^-)$. 

The cost functional is given by
\begin{equation}
J(\varphi)=\tfrac{1}{2}E\lbrace q_1x_{1,2}^2+r_0\varphi_0^2+q_2x_{2,2}^2+r_1\varphi_1^2\rbrace,
\end{equation} 
where $x_{k,2}$ denote the second component of $x_k$ and $q_k\geq 0$, $r_k>0$. Since this problem has been solved in \cite{Bania:2017}, only the main results will be presented and some laborious transformations will be omitted. To simplify the notation, we will skip some of the function's arguments, in particular instead of $m_k(Y_k,U_k), S_k(U_k), \varphi_k(Y_k)$, we will write briefly $m_k, S_k, \varphi_k$ etc. It has been shown in \cite{Bania:2018}, that joint density of $x_k, Y_k$ and the conditional density of $x_{k+1}$ are given by
\begin{align}
&p(x_{k},Y_k)=N(x_k,m_k,S_k)\prod\limits_{i=0}^kN(y_i,Cm_i^-,W_i),\\
&p(x_{k+1}|Y_k)=N(x_{k+1},m_{k+1}^-,S_{k+1}^-),
\end{align}  
where
\begin{align}
&W_i=(s_v+CS_i^-C^T),\\
&S_i=S_i^- -S_i^-C^TW_i^{-1}CS_i^-,\\
&m_i=m_i^-+S_iC^Ts_v^{-1}(y_i-Cm_i^-),\\
& m_{i+1}^-=A(u_{i})m_{i}+Bu_{i},\\
&S_{i+1}^-=A(u_{i})S_{i}A(u_{i})^T+D(u_{i}), i=0,1, ..., k.
\end{align}
Let us note, that equations (55-59) describes the Kalman filter for (45), (46).
 \subsubsection{ The optimal solution} According to (4-6), the strategy $\varphi$ consists of two mappings $u_0=\varphi_0(y_0)$ and $u_1=\varphi_1(y_0, y_1)$. The optimal solution can be found by dynamic programming. 
 It has been shown in \cite{Bania:2017}, that optimal strategy is given by
\begin{equation}
 \varphi_0^{\ast}(y_0)=\mathrm{arg}\min_{u_0\in{R}}R_0(u_0, y_0),
\end{equation}
\begin{equation}
\varphi_1^{\ast}(u_0, y_0, y_1)=-\frac{\beta_1(u_0, y_0, y_1)}{\alpha_1(u_0, y_0, y_1)},
\end{equation}
where  
\begin{align}
 &R_0(u_0, y_0)=\tfrac{1}{2}\alpha_0u_0^2+\beta_0u_0+\gamma_0+V_1(u_0, y_0),\\
&V_1(u_0, y_0)=\int N(y_1,Cm_1^-,W_1)R_1(u_0, y_0, y_1, \varphi_1^{\ast}(u_0, y_0, y_1))dy_1,\\
&R_1(u_0, y_0, y_1, \varphi_1)=\tfrac{1}{2}\alpha_1\varphi_1^2+\beta_1\varphi_1+\gamma_1,
\end{align}
\begin{align*}
&\alpha_0(y_0)=(A_1m_0+B)^TQ_1(A_1m_0+B)+\langle A_1S_0A_1^T+D_2, Q_1 \rangle +r_0,\\
&\beta_0(y_0)=(A_1m_0+B)^TQ_1A_0m_0+\tfrac{1}{2}\langle A_0S_0A_1^T+A_1S_0A_0^T+D_1, Q_1\rangle,\\
&\gamma_0(y_0)=\tfrac{1}{2}m_0^TA_0^TQ_1A_0m_0+\tfrac{1}{2}\langle A_0^TS_0A_0+D_0, Q_1\rangle,\\
&\alpha_1(u_0, y_0, y_1)=(A_1m_1+B)^TQ_2(A_1m_1+B)+\langle A_1S_1A_1^T+D_2, Q_2\rangle +r_1,\\
&\beta_1(u_0, y_0, y_1)=(A_1m_1+B)^TQ_2A_0m_1+\tfrac{1}{2}\langle A_0S_1A_1^T+A_1S_1A_0^T+D_1, Q_2\rangle,\\
&\gamma_1(u_0, y_0, y_1)=\tfrac{1}{2}m_1^TA_0^TQ_2A_0m_1+\tfrac{1}{2}\langle A_0^TS_1A_0+D_0, Q_2\rangle,\\
&Q_k=\mathrm{diag}(0, q_k), k=1, 2.
\end{align*}
Matrices $S_k$, and vectors $m_k$ are given by (56) and (57). The inner product of matrices $A$ and $B$ is denoted by $\langle A, B \rangle=\mathrm{tr}(A^TB)$.
\subsubsection{The information based solution} We will first calculate the conditional expectation. Let us denote $\xi=q_1x_{1,2}^2+r_0u_0^2+q_2x_{2,2}^2+r_1u_1^2$. After calculation of the integrals we get
\begin{equation}
E(\xi\vert Y_0)=\sum\limits_{i=1}^2\left(\mu_i^TQ_i\mu_i+r_{i-1}u_{i-1}^2+\mathrm{tr}(Q_i\Sigma_i)\right),
\end{equation}
where
\begin{align}
&\mu_{i+1}=A(u_i)\mu_i+Bu_i, \mu_0=m_0,\\&
\Sigma_{i+1}=A(u_i)\Sigma_iA(u_i)^T+D(u_i), \Sigma_0=S_0,\\&
Q_i=\mathrm{diag}(0, q_i), i=0,1. 
\end{align}
The conditional mean $m_0$ and covariance $S_0$ are given by (56), (57), where $S_0^-$, $m_0^-$ are known a priori. Now the mutual information will be calculated. It follows from (53) that
\begin{equation}
p(x_1,y_1\vert y_0)=N(x_1,m_1,S_1)N(y_1,Cm_1^-,W_1).
\end{equation}
According to section 4 we have $X_0^+=x_1$, $Y_0^+=y_1$, $Y_0=y_0$, $u^{(0)}=(u_0, u_1)^T$. Hence 
$p(X_0^+,Y_0^+\vert Y_0)=p(x_1,y_1\vert y_0)$ and calculation of the integral (36) gives 
\begin{equation}
I_0(u_0 \vert y_0)=\tfrac{1}{2}\ln \left( 1+\frac{C\Sigma_1(u_0)C^T}{s_v}\right).
\end{equation}
By the assumption we have $I_1(u^{(1)}\vert Y_1)=0$. According to (37), at the first step, we minimize the cost
\begin{equation}
J_0(u_0, u_1, y_0)=\tfrac{1}{2}\sum\limits_{i=1}^2\left(\vert\mu_i\vert_{Q_i}^2+r_{i-1}u_{i-1}^2+\mathrm{tr}(Q_i\Sigma_i)\right)
-\nu_0I_0(u_0\vert y_0).
\end{equation}
After performing calculations we get
\begin{equation}
J_0(u_0, u_1, y_0)=\tfrac{1}{2}\left(\vert\mu_1\vert_{Q_1}^2+r_0u_0^2+\mathrm{tr}(Q_1\Sigma_1(u_0))\right)-\nu_0I_0(u_0\vert y_0)+\tfrac{1}{2}\bar\alpha_0u_1^2+\bar\beta_0u_1+\bar\gamma_0,
\end{equation}
where
\begin{align*}
&\bar\alpha_0(u_0, y_0)=(A_1\mu_1+B)^TQ_2(A_1\mu_1+B)+\langle A_1S_1A_1^T+D_2, Q_2\rangle +r_1,\\
&\bar\beta_0(u_0, y_0)=(A_1\mu_1+B)^TQ_2A_0\mu_1+\tfrac{1}{2}\langle A_0S_1A_1^T+A_1S_1A_0^T+D_1, Q_2\rangle,\\
&\bar\gamma_0(u_0, y_0)=\tfrac{1}{2}\mu_1^TA_0^TQ_2A_0\mu_1+\tfrac{1}{2}\langle A_0^TS_1A_0+D_0, Q_2\rangle.
\end{align*}
The optimal value of $u_1$ is given by minimization of (72) wrt. $u_1$
\begin{equation}
u_1(u_0, y_0)=-\frac{\bar\beta_0(u_0, y_0)}{\bar\alpha_0(u_0, y_0)}.
\end{equation}
Substitution of (73) into (72) gives the analogue of equation (62)
\begin{equation}
\begin{split}
&\Psi(u_0, y_0)=J_0(u_0, u_1(u_0, y_0), y_0)=\\
&=\tfrac{1}{2}\left(\mu_1^TQ_1\mu_1+r_0u_0^2+\mathrm{tr}(Q_1\Sigma_1(u_0))\right)-
\nu_0I_0(u_0\vert y_0)+\bar\gamma_0-\frac{\bar\beta_0^2}{2\bar\alpha_0},
\end{split}
\end{equation}
where, for simplicity, the function $J_0(u_0, u_1(u_0, y_0), y_0)$ has been denoted by $\Psi(u_0, y_0)$. 	
Minimization of (74) wrt. $u_0$ gives $\bar u_0(y_0)$, which is the information-based strategy at the first step. After the first step, new information contained in $y_1$ is used by the filter (53-59) and the new state and covariance estimates ($m_1$ and $S_1$) are available. Thus, according to section 4, at the second step we minimize
\begin{equation}
J_1(u_1, Y_1)=\tfrac{1}{2}\left(\mu_2^TQ_2\mu_2+r_1u_1^2+\mathrm{tr}(Q_2\Sigma_2)\right),
\end{equation}
where
\begin{align}
&\mu_2=A(u_1)m_1+Bu_1,\\
&\Sigma_2=A(u_1)S_1A(u_1)^T+D(u_1)
\end{align}
and the control value $u_0$ (optimal or not) is treated as fixed parameter. After completing the calculations similar as above, we get
\begin{equation}
J_1(u_1)=\tfrac{1}{2}\bar\alpha_1u_1^2+\bar\beta_1u_1+\bar\gamma_1,
\end{equation}
where
\begin{align*}
&\bar\alpha_1(u_0, y_0, y_1)=(A_1m_1+B)^TQ_2(A_1m_1+B)+\langle A_1S_1A_1^T+D_2, Q_2\rangle +r_1,\\
&\bar\beta_1(u_0, y_0, y_1)=(A_1m_1+B)^TQ_2A_0m_1+\tfrac{1}{2}\langle A_0S_1A_1^T+A_1S_1A_0^T+D_1, Q_2\rangle,\\
&\bar\gamma_1(u_0, y_0, y_1)=\tfrac{1}{2}m_1^TA_0^TQ_2A_0m_1+\tfrac{1}{2}\langle A_0^TS_1A_0+D_0, Q_2\rangle.
\end{align*}
The optimal information-based solution in the second step is given by
\begin{equation}
\bar u_1=-\frac{\bar\beta_1(u_0, y_0, y_1)}{\bar\alpha_1(u_0, y_0, y_1)}.
\end{equation}
By comparing the formulas (61) and (79), we conclude that $\bar u_1$ will be equal to the optimal control $\varphi_1^{\ast}(y_0,y_1)$, provided that $u_0$ is equal to the optimal control $\varphi_0^{\ast}(y_0)$.  If this last condition is fulfilled then the optimal strategy can be recovered by the IBC. We will show below that it is possible provided that parameter $\nu_0$ in (71) is appropriately chosen. 
\subsubsection{Numerical example}
The parameters of the continuous-time system (41-43) were: $a_c=1$, $b_c=1$, $g_{1c}=g_{2c}=\sqrt{2}$, $s_v=0.01$, $T_0=0.1$. The parameters of the corresponding discrete-time system (45-51) were equal to: $a_1=1.0$, $a_2=0.90483$, $a_3=b=0.09516$, $d_1=0.2$, $d_2=9.674\:10^{-3}$, $d_3=0.18126$, $d_4=6.189\:10^{-4}$. The weights were: $r_0=r_1=10^{-3}$, $q_0=0$, $q_1=1$. The initial conditions were equal to $m_0=(0,0)^T$, $S_0=\mathrm{diag}(s_{0,1}, s_{0,2})$, $s_{0,1}=5$, $s_{0,2}=0.1$.  For simplicity, an assumption was made, that $y_0=0$. The results of numerical calculations of functions $R_0$, (62) and $\Psi$, (74), are shown in fig. 1.  
\begin{figure}[!b]
 \centering
  \includegraphics[width=0.8\textwidth]{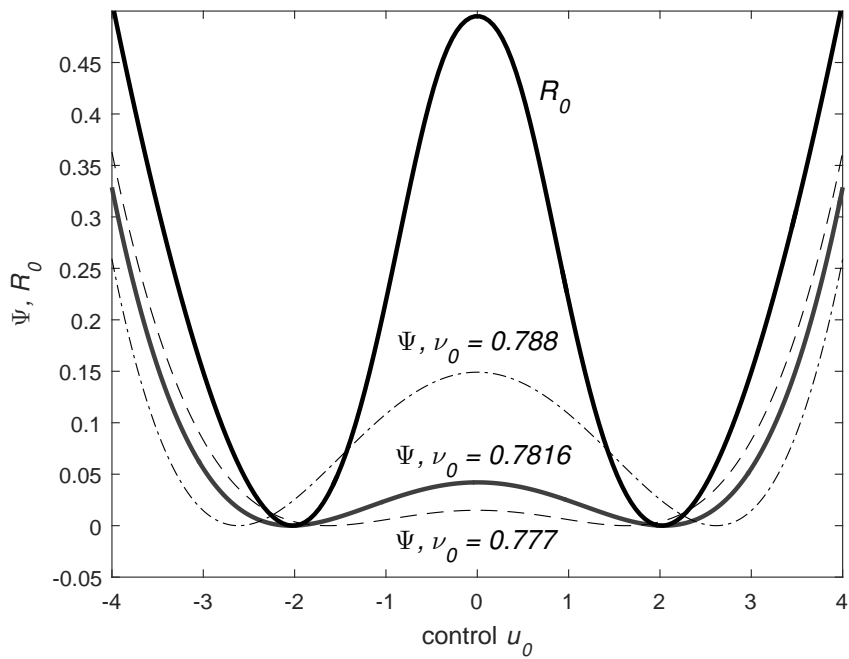}
 \caption{Graph of the functions $R _0$, (62) and $\Psi$, (74), for three values of $\nu _0$. If $ \nu_0\approx0.7816$, then function $\Psi$ has minima at the same points as $ R_0$ and the optimal strategy (60, 61) can be recovered by the IBC. For the better legibility of the picture, graphs of both functions were scaled and shifted vertically.}
  \label{fig1}
\end{figure}
The optimal control $\bar{u}_0$ is ambiguous and is equal to \ensuremath{\pm}2.0352. Although the initial 
condition is concentrated around zero, optimal control is non-zero. This is dual effect, described first by \cite{Feldbaum:1965}. Let us observe, that parameter $\nu_0$  can be chosen such that function $\Psi$, (74),  has minima at the same points as function $R_0$, (62). Hence the main conclusion that optimal feedback can be realized by the Information Based Control,  at least in this example. It's important to notice that  information based solution has been found without using dynamic programming, which substantially reduces computational complexity.
\section{Computational issues and practical implementation of the IBC}

Minimization of the cost (37) requires in advance the solution of the following problems:
\begin{enumerate}
\item
Calculation of the filtering distribution $p(x_k|Y_k)$.
\item
Calculation of the expectation in (37).
\item
Calculation of the mutual information (36).
\end{enumerate}
Filtering distribution can be calculated by Unscented Kalman Filter (UKF), Particle Filter (PF) or Gaussian Sum Filter (GSF)  (see \cite{Sarka:2013}, \cite{Alspach:1972}, for details). Since both the theory and practical implementations of these filters are well developed, we will assume below that $p(x_k|X_k)$ or its approximation is known. Let $x_{k,i}, i=1, ..., n_s$ denote samples from $p(x_k|Y_k)$ and let $x_{N,i}$, $Y_{k,i}^+$ denote the final state and observations generated by (1), (2) with initial condition $x_{k,i}$. Then it is easy to observe that samples $x_{N,i}$, $Y_{k,i}^+$ are drawn from $p(x_N|Y_k, u^{(k)})$ and $p(Y_k^+|Y_k, u^{(k)})$ respectively. Hence, the Monte Carlo approximation of the expectation in (37) is given by
\begin{equation}
E\lbrace L(x_N)\vert Y_k\rbrace\approx \frac{1}{n_s}\sum\limits_{i=1}^{n_s}L(x_{N,i}).
\end{equation}
Calculation of the mutual information (36) cannot be easily done without additional simplifications. Therefore, below we will briefly discuss special cases that are relatively easy to solve.  
Let us assume that equation (2) has the form   
\begin{equation}
y_k=h(x_k)+v_k,
\end{equation}
where $v_k\sim N(0, S_v)$. By direct calculation we get
\begin{equation}
I_{k} (u^{(k)}|Y_{k})=H_k(u^{(k)}|Y_k)-\frac{n_k}{2}\ln 2\pi e |S_v|,
\end{equation}
where $n_k$ denote size of $Y_k^+$ and
\begin{equation}
H_k(u^{(k)}|Y_k)=-\int p(Y_k^+|Y_k, u^{(k)})\ln p(Y_k^+|Y_k, u^{(k)}) dY_k^+
\end{equation}
is an entropy of $Y_k^+$, predicted at time $k$. 
%Hence maximization of information is equivalent to maximization of the measurements entropy. 
Kernel Density Estimator (KDE) of $p(Y_k^+|Y_k, u^{(k)})$ has the form    
 \begin{equation}
 \hat{p}_{n_s}(Y_k^+|Y_k, u^{(k)})=\frac{1}{n_s}\sum\limits_{i=1}^{n_s}N(Y_k^+,Y_{k,i}^+,\sigma^2I_{n_k}),
 \end{equation}
 where $I_{n_k}$ is an identity matrix of size $n_k$ and the bandwidth parameter is given by 
 \begin{equation}
 \sigma=\left( \frac{4}{n_s(n_k+2)n_k^2}\right) ^{\frac{1}{n_k+4}}.
 \end{equation}
 Now, the entropy estimator can be constructed as follows 
\begin{equation}
\begin{split}
&H_k(u^{(k)}|Y_k)=E(-\ln p(Y_k^+|Y_k, u^{(k)}))\approx -\frac{1}{n_s}\sum\limits_{i=1}^{n_s}\ln \hat{p}_{n_s}(Y_{k,i}^+|Y_k, u^{(k)})=\\
&=\frac{n_k}{2}\ln (2\pi\sigma^2) -\frac{1}{n_s}\sum\limits_{i=1}^{n_s}\ln \left(\frac{1}{n_s}\sum\limits_{j=1}^{n_s}e^{-D_{i,j}} \right),
\end{split}
\end{equation}
where
\begin{equation}
D_{i,j}=\frac{1}{2\sigma^2}||Y_{k,i}^+-Y_{k,j}^+||^2.
\end{equation}
Combining  (86) and (82) we get 
\begin{equation}
I_{k} (u^{(k)}|Y_{k})\approx\frac{n_k}{2}\ln \frac{\sigma^2}{e|S_v|} -\frac{1}{n_s}\sum\limits_{i=1}^{n_s}\ln \left(\frac{1}{n_s}\sum\limits_{j=1}^{n_s}e^{-D_{i,j}} \right).
\end{equation}
On the basis of (37), (80) and (88)   
\begin{equation}
\begin{split}
&J_{k} (u^{(k)})=E\lbrace L(x_N)\vert Y_k\rbrace -\nu_{k} I_{k} (u^{(k)}\vert Y_{k} )\approx\\
&\approx \frac{1}{n_s}\sum\limits_{i=1}^{n_s}\left( L(x_{N,i})+\nu_k\ln \left(\frac{1}{n_s}\sum\limits_{j=1}^{n_s}e^{-D_{i,j}} \right)\right)-
\frac{\nu_k n_k}{2}\ln\frac{\sigma^2}{e|S_v|}.
\end{split}
\end{equation}
Since the last term in (89) does not depend on $u^{(k)}$ then finally, the cost function to be minimized is given by 
\begin{equation}
\bar{J}_{k} (u^{(k)})=\frac{1}{n_s}\sum\limits_{i=1}^{n_s}\left( L(x_{N,i})+\nu_k\ln \left(\frac{1}{n_s}\sum\limits_{j=1}^{n_s}e^{-D_{i,j}} \right)\right).
\end{equation}

Convergence conditions for (84) and (86) are given in \cite{Jiang:2017} and \cite{Joe:1989}. These conditions can be fulfilled assuming that $p_w$, $p_v$, $f$, $h$ are sufficiently regular. In particular, if $p(Y_k^+|Y_k, u^{(k)})$  is bounded, globally Lipschitz, $C^4$  and its second order partial derivatives are all upper bounded by integrable function, then (84) converges uniformly and the variance of (86) tends to zero as $n_s\rightarrow\infty$.  The convergence rate is $O(n^{-\alpha})$, $\alpha\in(0,\tfrac{1}{2}]$.

Since $f, h, L$ are $C^2$ then cost (90) is also $C^2$ wrt. $u^{(k)}$ and its gradient can be effectively calculated by solving associated adjoint equation. Then minimization of (90) can be performed by combining global search algorithms (e.g. Differential Evolution, Simulated Annealing, Genetic Algorithms) with stochastic quasi-newton methods as local solvers (\cite{Byrd:2016}).      

Control of linear system with finite number of unknown parameters and with quadratic cost function is another special case that is tractable by the IBC. An analytical formulas describing the cost function and the filtering distribution has been given in \cite{Bania:2018}. Various types of recursive filters are also analysed in \cite{BanBar:2016}, \cite{BanBara:2017}, \cite{BarBan:2017}. Computationally effective lower bound of the mutual information (36), that can be utilized to construct an upper bound of the cost, is given in \cite{Bania:2019}. Thus, in this particular case, the cost (37) and its gradient can be calculated without using Monte Carlo sampling and the control problem is relatively easy to solve.

\section{Conclusions} Lower bounds of the cost function in stochastic optimal control problems have been analysed in terms of information exchange between the system and the controller. It has been proved, under weak assumptions, that the cost function is lower bounded by some decreasing function of mutual information between the system trajectory and control variables. Under some additional regularity conditions, the lower bound obtained above is linear function of information, but the constant $q$ appearing in (20) depend on system dynamics. It also follows from theorem 1 and (22), that minimum value of the cost is determined by the capacity of the measurement channel (i.e maximal value of $I(X;Y)$). Next, on the basis of Touchette-Lloyd inequality, a new one-step lower bound (26) has been established, provided that cost function is quadratic. This bound is independent on system dynamics and in that sense universal. 

Inequalities (20) and (22) indicates that restrictions in communication between parts of the system prevent certain states from being reached. One of the examples of such phenomenon is synchronization in dynamical networks. Since the synchronization problem can be interpreted as stochastic control task, then communication constraints of the form $I(X;Y)<C$ implies that $J_o-J(\varphi)\leqslant C$. As a consequence synchronization may be lost if $C$ is too small. This was confirmed in \cite{Huang:2012}.

The conclusion resulting from the analysis of information-theoretic bounds is that feedback controller must actively (if possible) generate information about the state of the system. On the basis of these results, the \textit{Information Based Control} approach to stochastic control has been proposed. The main idea of the IBC consists in replacing of the original control problem by sequence of simpler, auxiliary control problems. The cost function to be minimized in these auxiliary problems consists in two parts: the predicted expectation of the cost conditioned on available measurements and the penalty function for information deficiency. As penalty function, the predicted mutual information between the trajectory and measurements has been used.  Hence the method enforces active generation of information about the system state and is able to generate feedback strategy.  The IBC method can be also viewed as modification of the OLFO (\cite{Tse:1974}) algorithm or as compromise between control and state estimation. 

It follows from section 6 that minimization of the cost (37) can be performed by standard optimization algorithms, without using dynamic programming. Hence the computational complexity of the IBC is substantially smaller than complexity of DP. This feature of the IBC makes the possibility of solving large-scale tasks, which is impossible with DP. It has been shown that IBC is able to find an optimal solutions, provided that learning intensity (parameter $\nu_k$) is appropriately selected. The optimal value of $\nu_k$ can be tuned experimentally but, at the current stage of research, this problem is not resolved. The ability of the IBC to find optimal solution is surprising but, due to the complexity of the problem, convergence to optimal solution is difficult to investigate and has not been proven. 

Effective calculation of the mutual information or development of its approximation is crucial issue and some methods from the optimal experimental design and fault detection theory can be adopted here (see \cite{Bania:2019}, \cite{Uci:2004}, \cite{Korbicz:2004}). It is also possible to use the information lower bound proposed by \cite{KolTra:2017}. 

Application of the IBC method to solve more realistic control problems and developing of information-based model predictive control algorithms is planned as a part of future works.
\bibliographystyle{agsm}
\bibliography{literature}
\end{document}